\theoremstyle{plain}
\newtheorem{theorem}{Theorem}[section]
\newtheorem{lemma}[theorem]{Lemma}
\theoremstyle{definition}
\theoremstyle{remark}
\newcommand{\Z}{\mathbb{Z}}
\newcommand{\co}{\colon\thinspace}
\newcommand{\del}{\partial}
\newcommand{\hpi}{\hat\pi}
\begin{document} 

\title{The Goldman bracket characterizes homeomorphisms}

\author{Siddhartha Gadgil}

\address{	Department of Mathematics,\\
		Indian Institute of Science,\\
		Bangalore 560012, India}

\email{gadgil@math.iisc.ernet.in}

\subjclass{Primary 57M99; Secondary 17B99}

\date{\today}

\begin{abstract}
We show that a homotopy equivalence between compact, connected, oriented surfaces with non-empty boundary is homotopic to a homeomorphism if and only if it commutes with the Goldman bracket.
\\

Nous montrons que d'une \'equivalence d'homotopie entre les compacte, con-
connectés, les surfaces orientées avec bord non vide est homotope \`a un hom\'eo-
morphisme si et seulement si elle commute avec le  Goldman bracket.
\end{abstract}

\maketitle

\section{Introduction}

Given a homotopy equivalence between manifolds of the same dimension, a fundamental question in topology is whether it is homotopic to a homeomorphism. Basic examples of exotic homotopy equivalences -- ones that are not  homotopic to the homeomorphisms, are maps between surfaces with boundary. For example, the three-holed sphere and the one-holed torus are homotopy equivalent but not homeomorphic. 

We assume throughout that all surfaces we consider are oriented. In this note, we show that there is a simple and natural characterisation of when a homotopy equivalence $f\co \Sigma_1\to\Sigma_2$ between compact surfaces is homotopic to a homeomorphism in terms of the \emph{Goldman bracket}, which is a Lie Algebra structure associated to a surface. More precisely, if $\hpi(\Sigma)$ denotes the set of free homotopy classes of closed curves in $\Sigma$, then the Goldman bracket (whose definition we recall below) is a bilinear map
$$[\cdot,\cdot]\co \Z[\hpi(\Sigma)]\times\Z[\hpi(\Sigma)]\to\Z[\hpi(\Sigma)],$$
which is skew-symmetric and satisfies the Jacobi identity.

\begin{theorem}\label{main}
A homotopy equivalence $f\co \Sigma_1\to \Sigma_2$ between compact, connected, oriented surfaces with non-empty boundary is homotopic to a homeomorphism if and only if it commutes with the Goldman bracket, i.e., for all $x,y\in\Z[\hpi(\Sigma_1)]$, we have,
\begin{equation}\label{preserve}
[f_*(x),f_*(y)]=f_*([x,y]),
\end{equation}
where $f_*\co \Z[\hpi(\Sigma_1)]\to\Z[\hpi(\Sigma_1)]$ is the function induced by $f$.
\end{theorem}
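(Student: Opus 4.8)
The plan is to combine the classical characterisation of homotopy equivalences of surfaces with boundary that are homotopic to homeomorphisms with a description of the centre of the Goldman Lie algebra. For the easy direction, recall that since $\Sigma_2$ is a $K(\pi_1(\Sigma_2),1)$, free homotopy classes of maps $\Sigma_1\to\Sigma_2$ correspond to conjugacy classes of homomorphisms $\pi_1(\Sigma_1)\to\pi_1(\Sigma_2)$; that $f$ is a homotopy equivalence says that the associated $\phi\co\pi_1(\Sigma_1)\to\pi_1(\Sigma_2)$ is an isomorphism, and $f_*$ is the linear extension of the resulting bijection between the bases $\hpi(\Sigma_1)$ and $\hpi(\Sigma_2)$. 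If $f$ is homotopic to an orientation-preserving homeomorphism $h$, naturality of the Goldman bracket gives $[f_*x,f_*y]=[h_*x,h_*y]=h_*[x,y]=f_*[x,y]$, which is \eqref{preserve}. (An orientation-reversing homeomorphism would instead give $f_*[x,y]=-[f_*x,f_*y]$, so for oriented surfaces ``homeomorphism'' is to be read as ``orientation-preserving homeomorphism''.) For the converse we assume \eqref{preserve} and prove that $\phi$ carries the peripheral structure of $\Sigma_1$ bijectively onto that of $\Sigma_2$; by the classical theorem (Nielsen) that a homotopy equivalence of compact surfaces with boundary respecting the peripheral structure is homotopic to a homeomorphism, this finishes the proof --- and the homeomorphism must be orientation-preserving, as an orientation-reversing one would contradict \eqref{preserve}.

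The essential point is the following.
\medskip

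\noindent\textbf{Key Lemma.} \emph{Let $\Sigma$ be a compact, connected, oriented surface with nonempty boundary which is not an annulus. Then the classes $\alpha\in\hpi(\Sigma)$ with $[\alpha,\beta]=0$ for all $\beta\in\hpi(\Sigma)$ are exactly the trivial class and the powers of the boundary classes; that is, the centre of the Goldman Lie algebra of $\Sigma$ is spanned by these.}
\medskip

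\noindent Granting this, the converse is formal. If $\Sigma_1$ is an annulus then $\Sigma_2$ is also an annulus (both have $\pi_1\cong\Z$) and $f$ is automatically homotopic to a homeomorphism; so assume $\Sigma_1$, hence $\Sigma_2$, is not an annulus. As a Lie algebra isomorphism $f_*$ carries the centre of $\Z[\hpi(\Sigma_1)]$ onto that of $\Z[\hpi(\Sigma_2)]$, and since $f_*$ maps the basis $\hpi(\Sigma_1)$ bijectively to $\hpi(\Sigma_2)$, the Key Lemma shows that $f_*$ restricts to a bijection of $\{1\}\cup\{\la c_i^{\,n}\ra\}$ onto $\{1\}\cup\{\la d_j^{\,n}\ra\}$, where $c_i$ and $d_j$ run over the boundary loops of $\Sigma_1$ and $\Sigma_2$ and $n\neq 0$. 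Now $f_*$ is induced by the group isomorphism $\phi$, so $f_*\la w\ra=\la\phi(w)\ra$; in particular $f_*$ fixes the trivial class, and $\phi$ preserves the property of being (or not being) a proper power. Since a boundary loop is not a proper power, the class $\la c_i^{\,n}\ra$ fails to be a proper power exactly when $n=\pm1$, and similarly for $\Sigma_2$; hence $f_*$ restricts to a bijection of $\{\la c_i^{\pm1}\ra\}$ onto $\{\la d_j^{\pm1}\ra\}$, i.e.\ $\phi$ carries the (oriented) boundary classes of $\Sigma_1$ bijectively to those of $\Sigma_2$. This is the peripheral condition, and the classical theorem applies.

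It remains to prove the Key Lemma, which I expect to be the main obstacle. The ``if'' direction is immediate: a power of a boundary loop has a representative inside a collar of $\partial\Sigma$, hence disjoint from a representative of any given $\beta$, so the sum defining $[\alpha,\beta]$ is empty. For the ``only if'' direction, let $\alpha$ be nontrivial and not a power of a boundary class. Fix a hyperbolic metric on $\Sigma$ with geodesic boundary and take the geodesic representative of $\alpha$: it lies in the interior and is not a boundary geodesic, so one can choose a simple closed geodesic $\gamma$ with $i(\gamma,\alpha)>0$. One then has to show that $[\gamma,\alpha]\neq 0$, i.e.\ that the signed free homotopy classes obtained by smoothing $\gamma\cup\alpha$ at its $i(\gamma,\alpha)$ intersection points do not all cancel in $\Z[\hpi(\Sigma)]$. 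It is the simplicity of $\gamma$ that forces this: one can argue directly and combinatorially with the smoothed loops --- keeping track of how each of them meets $\gamma$ --- or use Goldman's identification of $[\gamma,\,\cdot\,]$ with the infinitesimal twist flow along $\gamma$ on the character variety, which is non-trivial precisely because $\gamma$ is essential and non-peripheral. (Alternatively one may cite the computation of the centre of the Goldman Lie algebra of a surface from the literature.) Establishing this non-cancellation in general --- in particular for $\alpha$ with self-intersections, and for surfaces of small complexity such as a pair of pants --- is the heart of the matter; the remainder of the argument is formal or classical.
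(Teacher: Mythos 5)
Your proposal is correct and follows essentially the same route as the paper: characterize the peripheral classes as those whose bracket with everything vanishes, upgrade to primitive peripheral classes via preservation of proper powers, and invoke the classical Nielsen/Waldhausen theorem on peripheral-structure-preserving homotopy equivalences. The non-cancellation step you flag as the heart of the matter ($[\gamma,\alpha]\neq 0$ for a simple closed curve $\gamma$ with $i(\gamma,\alpha)>0$) is exactly part (3) of Goldman's theorem, which the paper simply cites, so your option of quoting the literature is the intended resolution.
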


For closed surfaces, every homotopy equivalence is homotopic to a  homeomorphism. So the corresponding result holds trivially in this case.

Homotopy equivalences that are homotopic to homeomorphisms can be characterised as those that preserve the so called \emph{peripheral structure}. However, our result has the advantage that the characterization is in terms of a structure defined without reference to the boundary. Furthermore, the Goldman bracket is the simplest instance of so called \emph{string topology}~\cite{CS1}, which in turn is related to the Floer homology of the cotangent bundle~\cite{AS}.

\section{Preliminaries}

\subsection{The Goldman bracket}
We recall the definition of the Goldman bracket~\cite{Gol} on a connected surface $\Sigma$. Let $\hpi(\Sigma)$ denote the set of free homotopy classes of curves in $\Sigma$. For a closed curve $\alpha$, let $\langle \alpha\rangle\in\hpi(\Sigma)$ denote its homotopy class. 

Given homotopy classes $x$ and $y$ of closed curves, we consider smooth, oriented representatives $\alpha$ and $\beta$ that intersect transversally in double points.  The bracket of $x$ and $y$ is then defined as the sum
\begin{equation}\label{bracket}
[x,y]=\sum_{p\in \alpha\cap\beta} \varepsilon_p\ \langle\alpha*_p\beta\rangle,
\end{equation}
where, for $p\in\alpha\cap\beta$, $\varepsilon_p$  is the sign of the intersection between $\alpha$ and $\beta$ at $p$  and $\alpha*_p\beta$ is the product of the loops $\alpha$ and $\beta$ viewed as based at $p$. 

This expression is well defined by the first part of the following remarkable theorem of Goldman. A completely topological proof of this has been given by Chas~\cite{chas}.

\begin{theorem}[Goldman~\cite{Gol}]\label{gold}
The bracket defined by Equation~\ref{bracket} has the following properties.
\begin{enumerate}
\item Equation~\ref{bracket} gives a well-defined bilinear map $\Z[\hpi(\Sigma)]\times\Z[\hpi(\Sigma)]\to\Z[\hpi(\Sigma)]$, i.e., the right hand side of the equation depends only on the homotopy classes of $\alpha$ and $\beta$. 
\item The bracket is skew-symmetric and satisfies the Jacobi identity.
\item If $\alpha$ is a simple curve and $x=\langle\alpha\rangle$, then, for  $y\in\hpi(\Sigma)$, $[x,y]=0$ if and only if $y=\langle\beta\rangle$ for a closed curve $\beta$ such that $\beta$ is disjoint from $\alpha$.
\end{enumerate}
\end{theorem}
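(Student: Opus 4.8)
The plan is to deduce (1) by reducing the well-definedness of~\eqref{bracket} to invariance under the elementary moves that occur in a generic homotopy; to obtain (2) from a one-line symmetry argument together with a mechanical term-matching computation; and to settle (3) by putting the two curves in minimal position and analysing the resulting picture in the hyperbolic plane.

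For (1), the bracket is \emph{defined} on basis elements of $\Z[\hpi(\Sigma)]$ by~\eqref{bracket} and then extended bilinearly, so the only point is that the right-hand side of~\eqref{bracket} depends on $\la\alpha\ra$ and $\la\beta\ra$ alone. Since any two free homotopy classes have smooth representatives meeting transversally in finitely many double points, it is enough to compare the value of~\eqref{bracket} on two admissible pairs joined by a homotopy, which we take to move $\alpha$ first and then $\beta$, and which we put in general position. As $\alpha$ and $\beta$ stay distinct curves throughout, the degenerations that can change $\alpha\cap\beta$ are of two kinds: a bigon move, which creates or destroys a pair of intersection points $p_+,p_-$ of opposite sign bounding a small embedded bigon; and a reordering move, which permutes the cyclic order of the intersection points along a strand but alters neither the set of points nor their signs. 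The reordering move leaves~\eqref{bracket} unchanged, because each summand $\varepsilon_p\la\alpha*_p\beta\ra$ depends only on the homotopy classes of the $\alpha$-loop and the $\beta$-loop based at $p$. For the bigon move, $p_+$ and $p_-$ are joined by a short arc of $\alpha$ and a short arc of $\beta$, so sliding the basepoint across the bigon shows $\la\alpha*_{p_+}\beta\ra=\la\alpha*_{p_-}\beta\ra$, and since $\varepsilon_{p_+}=-\varepsilon_{p_-}$ their contributions cancel. (Self-crossings that $\alpha$ may acquire during the homotopy involve only $\alpha$ and are irrelevant.) This proves (1).

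For (2), skew-symmetry is immediate: interchanging $(\alpha,\beta)$ with $(\beta,\alpha)$ negates every $\varepsilon_p$ and replaces $\alpha*_p\beta$ by the cyclic rotation $\beta*_p\alpha$, which represents the same class, so $[y,x]=-[x,y]$. For the Jacobi identity, represent $x,y,z$ by curves $\alpha,\beta,\gamma$ that are pairwise transverse with no triple point, expand $[[x,y],z]+[[y,z],x]+[[z,x],y]$ as a sum indexed by ordered pairs of intersection points chosen among $\alpha\cap\beta$, $\beta\cap\gamma$ and $\gamma\cap\alpha$, group the terms by the unordered pair of points involved, and check that the two contributions in each group cancel, reading the based loops off the local picture and using that the products live in $\hpi(\Sigma)$, i.e.\ are taken up to conjugacy. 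The sign bookkeeping is the fiddliest part of (1)--(2), but it is routine; the topological proof of Chas~\cite{chas} carries it out cleanly.

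For (3), the ``if'' direction is trivial, since a representative $\beta$ disjoint from $\alpha$ makes~\eqref{bracket} an empty sum. For the converse, suppose the geometric intersection number $i(\la\alpha\ra,y)$ is nonzero; I must exhibit a term of $[x,y]$ that survives. Fix a complete hyperbolic metric on $\Sigma$ and pass to geodesic representatives: that of $\la\alpha\ra$ is a \emph{simple} closed geodesic $\alpha$ and that of $y$ is a closed geodesic $\beta$, automatically in minimal position, with $\alpha\cap\beta\ne\varnothing$. It suffices to show the map $p\mapsto\la\alpha*_p\beta\ra$ on $\alpha\cap\beta$ is injective, for then no cancellation can occur and $[x,y]\ne0$. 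To see the injectivity, lift to the universal cover $\H$: a lift of $\alpha*_p\beta$ corresponds to the hyperbolic isometry $a_pb_p\in\pi_1(\Sigma,p)$, whose axis (equivalently, whose attracting fixed point on $\del\H$) is determined by the configuration of the lifts of $\alpha$ and $\beta$ through a chosen lift of $p$; because $\alpha$ is simple its lifts are pairwise disjoint geodesics, which makes this configuration unambiguous and forces distinct intersection points to yield isometries with distinct axes, hence elements in distinct conjugacy classes. This non-cancellation statement is the technical heart of the theorem, and it is exactly here that simplicity of $\alpha$ enters. Finally, $i(\la\alpha\ra,y)=0$ is precisely the statement that $y$ has a representative disjoint from $\alpha$, which completes the proof.
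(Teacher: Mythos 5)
The paper does not actually prove this statement: it is quoted verbatim from Goldman~\cite{Gol}, with Chas~\cite{chas} cited for a purely topological proof, so there is no in-paper argument to compare yours against. Judged on its own, your treatment of (1) and (2) follows the standard line --- invariance under the generic degenerations of a homotopy (cancelling pairs of opposite sign born or killed at a tangency, continuity of each summand through triple-point moves), the conjugation observation for skew-symmetry, and pairwise cancellation of terms for Jacobi --- and is acceptable as a sketch, though you explicitly defer the Jacobi sign bookkeeping to the literature rather than carrying it out.

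Part (3) is where the real content of the theorem lies, and your argument has a genuine gap there. What you need is that, for geodesic representatives, the classes $\la\alpha*_p\beta\ra$ for $p\in\alpha\cap\beta$ are pairwise distinct, and you dispose of this with ``distinct intersection points \dots\ yield isometries with distinct axes, hence elements in distinct conjugacy classes.'' That inference is backwards: conjugate hyperbolic isometries typically have \emph{different} axes, since the axis of $gwg^{-1}$ is the $g$-translate of the axis of $w$, so distinctness of the axes of $a_pb_p$ and $a_qb_q$ tells you nothing about whether they are conjugate. What must actually be shown is that for $p\neq q$ no deck transformation carries the axis of $a_pb_p$ to that of $a_qb_q$ compatibly with the group elements; this is the several-step separation argument in Section 5 of Goldman's paper (or Chas's combinatorial substitute for it), and it is precisely where the pairwise disjointness of the lifts of the simple curve $\alpha$ enters in an essential way. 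As written, the technical heart of part (3) is asserted rather than proved.
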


\subsection{Peripheral classes}

For a connected surface $\Sigma$, if $p\in\Sigma$ is a point, $\hpi(\Sigma)$ is the set of conjugacy classes in $\pi_1(\Sigma,p)$. The power operations $\pi_1(\Sigma,p)\to\pi_1(\Sigma,p)$, $\alpha\mapsto\alpha^n$, $n\in\Z$, and the inverse function $\alpha\mapsto\alpha^{-1}$ on $\pi_1(\Sigma,p)$ induce well-defined functions on $\hpi(\Sigma)$. A class $x\in\hpi$ that is not a non-trivial power is called \emph{primitive}.

Suppose henceforth that $\Sigma$ is a compact, connected surface with non-empty boundary. An element of $\hpi$ is said to be \emph{peripheral} if it can be represented by a curve $\alpha\subset \del \Sigma$. 

Assume further that $\Sigma$ has negative Euler characteristic, i.e., $\Sigma$ is not a disc or annulus. Then each component of $\del \Sigma$ corresponds to a pair of primitive peripheral classes (one for each orientation of the boundary curve) which are inverses of each other. Further, the primitive peripheral classes corresponding to different boundary components are different.

\section{Proof of Theorem~\ref{main}}

We now turn to the proof of Theorem~\ref{main}. It is clear from the definition that a homeomorphism commutes with the Goldman bracket. As homotopic maps induce the same function on $\hpi$, it follows that if a homotopy equivalence $f\co \Sigma_1\to\Sigma_2$ is homotopic to a homeomorphism, then $f_*$ commutes with the Goldman bracket.

The rest of the paper is devoted to proving the converse. Assume henceforth that $f\co \Sigma_1\to\Sigma_2$ is a map between connected, compact, oriented surfaces with non-empty boundary so that for all $x,y\in\Z[\hpi(\Sigma_1)]$, we have,
\[
[f_*(x),f_*(y)]=f_*([x,y]).
\]

\begin{lemma}\label{perinul}
For a compact surface $\Sigma$ with boundary, a non-trivial class $x\in\hpi(\Sigma)$ is peripheral if and only if for all $y\in\hpi(\Sigma)$, we have
$[x,y]=0.$
\end{lemma}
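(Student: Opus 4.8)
The plan is to prove the two directions separately, with essentially all the content in the converse. For the forward direction, suppose $x=\la\alpha\ra$ is peripheral with $\alpha\subset\del\Sigma$; I would push $\alpha$ into a collar neighbourhood of $\del\Sigma$ so that it misses a compact core $\Sigma_0\subset\Sigma$ which is a deformation retract of $\Sigma$, represent an arbitrary $y\in\hpi(\Sigma)$ by a curve inside $\Sigma_0$, and conclude from Equation~\ref{bracket} (and Theorem~\ref{gold}(1)) that $[x,y]=0$. The cases in which $\Sigma$ is a disc or an annulus are trivial — there are no non-trivial classes, resp.\ all classes are peripheral and the bracket vanishes identically — so I would assume $\chi(\Sigma)<0$ from now on.

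For the converse, assume $x\neq 1$ and $[x,y]=0$ for all $y$, and aim to show $x$ is peripheral. The key step is to feed into the hypothesis the test classes $y=\la a\ra$ with $a$ ranging over the essential, non-peripheral simple closed curves of $\Sigma$: skew-symmetry gives $[\la a\ra,x]=-[x,\la a\ra]=0$, and since $a$ is \emph{simple}, part~(3) of Theorem~\ref{gold} then forces $x$ to have a representative disjoint from $a$, i.e.\ the geometric intersection number $i(x,a)$ is zero. So the lemma reduces to the purely topological statement that a non-trivial class whose geometric intersection number with every essential, non-peripheral simple closed curve is zero must be peripheral.

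To prove this topological statement — in the contrapositive form, that a non-trivial, non-peripheral $x$ satisfies $i(x,a)>0$ for some essential, non-peripheral simple closed curve $a$ — I would, assuming $\Sigma$ is not a pair of pants, fix a hyperbolic metric with geodesic boundary and let $\gamma$ be the geodesic representative of $x$; since $x$ is non-peripheral, $\gamma$ lies in the interior of $\Sigma$. Choose a pants decomposition $\mathcal{C}$ of $\Sigma$ (it exists since $\Sigma$ is not a disc, annulus, or pair of pants) with $\gamma$ in minimal position. If $\gamma$ meets some $c\in\mathcal{C}$, take $a=c$. Otherwise $\gamma$ lies in a single pair-of-pants piece $P_0$, which has a boundary curve in $\mathcal{C}$ (as $P_0\neq\Sigma$). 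If $x=\la c'\ra^{n}$ for a boundary curve $c'$ of $P_0$ and some $n\neq0$, then $c'$ cannot lie on $\del\Sigma$ (else $x$ would be peripheral in $\Sigma$), so $c'\in\mathcal{C}$, and $i(x,a)=|n|\,i(c',a)>0$ for any $a$ crossing $c'$. Otherwise $x$ is non-peripheral in $P_0$ itself; taking $e$ to be the essential arc of $P_0$ joining a $\mathcal{C}$-boundary curve $c_0$ to itself and separating the other two boundary components of $P_0$, one observes that $P_0\sm e$ consists of annuli, so $\gamma$ must cross $e$; closing $e$ up across the neighbouring pants piece to a simple closed curve $a$ meeting $\mathcal{C}$ minimally yields $i(x,a)=|\gamma\cap e|>0$. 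In each case $a$ crosses the interior geodesic $\gamma$, hence is essential and non-peripheral.

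The remaining, and genuinely hard, case is $\Sigma$ a pair of pants $P$: then there are \emph{no} essential, non-peripheral simple closed curves, Theorem~\ref{gold}(3) gives no leverage, and the reduction above collapses to a vacuous statement. This is the main obstacle. Here I would argue directly: given a non-trivial, non-peripheral $x$ — equivalently, a cyclically reduced conjugacy class in the rank-two free group $\pi_1(P)$ that involves both generators and is not a power of a boundary word — I would produce an explicit non-simple test curve $y$ (a figure-eight curve around two of the three boundary components), put $x$ and $y$ in minimal position, and verify from Equation~\ref{bracket} that the contributing terms do not cancel, so $[x,y]\neq0$. Making this computation go through uniformly for all such $x$ is where the real work lies.
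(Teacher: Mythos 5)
Your forward direction and the generic case of your converse are correct and essentially reproduce the paper's argument: the paper likewise reduces the converse, via part~(3) of Theorem~\ref{gold}, to producing a simple closed curve with non-zero geometric intersection number with $x$, and justifies its existence by appealing to a filling pair of simple closed curves (or to minimal intersection of geodesics); your pants-decomposition construction is a legitimate, more explicit version of that step, though the claim $i(x,a)=|\gamma\cap e|$ at the end still needs a minimal-position (bigon or geodesic) argument.

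The genuine gap is the one you flag yourself: the pair of pants. There every essential simple closed curve is boundary-parallel, hence has zero geometric intersection number with every class, so part~(3) of Theorem~\ref{gold} gives no information and no filling pair of simple closed curves exists; you have correctly put your finger on the point where the reduction breaks down (the paper's own proof asserts the existence of such a simple closed curve for any non-peripheral class and does not treat this case separately). But your proposal stops at naming a strategy --- bracket $x$ against a figure-eight curve around two boundary components and check that the terms of Equation~\ref{bracket} do not cancel --- without carrying it out, and the non-cancellation is precisely the hard part: distinct intersection points can contribute freely homotopic loops with opposite signs, so one needs an actual argument (a combinatorial computation with cyclically reduced words in the rank-two free group, or a hyperbolic length/angle estimate separating the terms) to exclude total cancellation. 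As written, the lemma is not proved for the pair of pants, a case that is needed for the main theorem since a pair of pants has negative Euler characteristic; to close the gap you must either complete that computation or invoke the known determination of the centre of the Goldman Lie algebra of a compact surface with boundary.
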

\begin{proof}
Assume $x$ is peripheral, i.e., $x=\langle\alpha\rangle$ with $\alpha\subset \del\Sigma$. As any closed curve in $\Sigma$ is freely homotopic to one in the interior of $\Sigma$,  every element $y\in \hpi(\Sigma)$ can be represented by a curve $\beta\subset int(\Sigma)$. Hence $\alpha\cap\beta=\phi$, which implies by Equation~\ref{bracket} that $[x,y]=0$.

Conversely, let $x=\langle\alpha\rangle$ be such that $[x,y]=0$ for all $y\in\hpi(\Sigma)$. If $x$ is not peripheral, it is well known that there is a simple closed curve $\beta$ so that the geometric intersection number of $\alpha$ and $\beta$ is non-zero, i.e., $\alpha$ is not homotopic to a curve that is disjoint from $\beta$. This follows from the fact that there is a pair of simple curves in $\Sigma$ that \emph{fill} $\Sigma$ (i.e., all curves with zero geometric intersection with each of them is peripheral), or alternatively by considering the geodesic representative for $x$ with respect to a hyperbolic metric and using the result that geodesics intersect minimally.

Now, by Goldman's theorem (Theorem~\ref{gold}, part~(3)), as the simple closed curve $\beta$ has non-zero geometric intersection number with $\alpha$ and $y=\langle \beta\rangle$, we have $[x,y]\neq 0$, a contradiction.
 
\end{proof}

Recall that we assume that we have a map $f\co \Sigma_1\to\Sigma_2$ such that $f_*$ commutes with the Goldman bracket. In case $\Sigma_1$ (hence $\Sigma_2$) is a disc or an annulus, it is easy to see that any homotopy equivalence is homotopic to a homeomorphism. We can hence assume henceforth that $\Sigma_1$ and $\Sigma_2$ have negative Euler characteristic.

\begin{lemma}\label{imgbdy}
Suppose $f\co \Sigma_1\to\Sigma_2$ induces a surjection $f_*\co \hpi(\Sigma_1)\to\hpi(\Sigma_2)$. Then if $x\in\hpi(\Sigma_1)$ is a peripheral class, so is $f_*(x)$.
\end{lemma}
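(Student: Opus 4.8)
The plan is to use Lemma~\ref{perinul} to re-express the property ``peripheral'' purely in terms of vanishing of Goldman brackets, and then to transport this characterization from $\Sigma_1$ to $\Sigma_2$ using the two hypotheses at hand: that $f_*$ commutes with the bracket, and that $f_*\co\hpi(\Sigma_1)\to\hpi(\Sigma_2)$ is onto. Concretely, if $x\in\hpi(\Sigma_1)$ is peripheral then $[x,y]=0$ for all $y$, and one wants to deduce that $[f_*(x),z]=0$ for all $z\in\hpi(\Sigma_2)$, since that forces $f_*(x)$ to be peripheral.

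I would first dispose of the degenerate case: if $x$ is the trivial class, then $f_*(x)$ is again the class of a constant loop, which can be taken to lie on $\del\Sigma_2$, so $f_*(x)$ is peripheral. Assume henceforth that $x$ is non-trivial and peripheral. By the forward implication of Lemma~\ref{perinul} (applied to $\Sigma_1$), we have $[x,y]=0$ for every $y\in\hpi(\Sigma_1)$. Now fix an arbitrary $z\in\hpi(\Sigma_2)$. By the surjectivity hypothesis there is $y\in\hpi(\Sigma_1)$ with $f_*(y)=z$, and then, using that $f_*$ commutes with the bracket and is $\Z$-linear,
\[
[f_*(x),z]=[f_*(x),f_*(y)]=f_*([x,y])=f_*(0)=0.
\]
Since $z$ was arbitrary, $[f_*(x),z]=0$ for all $z\in\hpi(\Sigma_2)$.

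It remains to conclude. If $f_*(x)$ is non-trivial, the converse implication of Lemma~\ref{perinul}, applied to $\Sigma_2$, shows that a class having trivial bracket with everything is peripheral, so $f_*(x)$ is peripheral; and if $f_*(x)$ is trivial it is peripheral by the remark above. Either way $f_*(x)$ is peripheral, as claimed. The argument is short, and there is no real obstacle; the one point deserving attention is precisely the non-triviality hypothesis built into Lemma~\ref{perinul}, i.e.\ the a priori possibility that $f_*$ collapses the non-trivial peripheral class $x$ to the trivial class. This is exactly why surjectivity of $f_*$ on $\hpi$ suffices here (in place of, say, $f$ being a homotopy equivalence), and it is handled simply by noting that the trivial class is itself peripheral.
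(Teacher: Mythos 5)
Your proof is correct and follows essentially the same route as the paper's: apply Lemma~\ref{perinul} on $\Sigma_1$, push the vanishing of brackets through $f_*$ using compatibility with the bracket and surjectivity, and apply Lemma~\ref{perinul} again on $\Sigma_2$. Your extra care with the trivial class (noting that Lemma~\ref{perinul} only characterizes \emph{non-trivial} peripheral classes, and that the trivial class is peripheral anyway) is a small refinement the paper glosses over, but it does not change the argument.
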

\begin{proof}
As $x$ is peripheral, by Lemma~\ref{perinul} $[x,y]=0$ for all $y\in\hpi(\Sigma_1)$. As $f_*$ commutes with the Goldman bracket, it follows that $[f_*(x),f_*(y)]=0$ for all $y\in\hpi(\Sigma_1)$. As $f_*$ is surjective, $[f_*(x),z]=0$ for all $z\in\hpi(\Sigma_2)$. It follows by Lemma~\ref{perinul} that $f_*(x)$ is peripheral.
\end{proof}

\begin{lemma}\label{bdyinv}
Suppose $f\co \Sigma_1\to\Sigma_2$ induces an injection $f_*\co \hpi(\Sigma_1)\to\hpi(\Sigma_2)$. Then, for $x\in\hpi(\Sigma_1)$, if $f_*(x)$ is peripheral then $x$ is peripheral.
\end{lemma}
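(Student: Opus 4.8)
The plan is to exploit the characterization of peripheral classes from Lemma~\ref{perinul} in the reverse direction, using the contrapositive together with Lemma~\ref{imgbdy}. Suppose $x \in \hpi(\Sigma_1)$ is \emph{not} peripheral; I want to show $f_*(x)$ is not peripheral. By Lemma~\ref{perinul}, since $x$ is non-trivial and non-peripheral, there exists $y \in \hpi(\Sigma_1)$ with $[x,y] \neq 0$. Applying $f_*$ and using that it commutes with the Goldman bracket gives $[f_*(x), f_*(y)] = f_*([x,y])$. The element $[x,y] \in \Z[\hpi(\Sigma_1)]$ is a nonzero integer combination of free homotopy classes; since $f_*$ is injective on $\hpi(\Sigma_1)$, it sends distinct classes to distinct classes, so the induced $\Z$-linear map $\Z[\hpi(\Sigma_1)] \to \Z[\hpi(\Sigma_2)]$ is injective. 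Hence $f_*([x,y]) \neq 0$, so $[f_*(x), f_*(y)] \neq 0$. Therefore $f_*(x)$ is a non-trivial class with a non-vanishing bracket against some element of $\hpi(\Sigma_2)$, and by Lemma~\ref{perinul} again, $f_*(x)$ is not peripheral.

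The one subtlety to address is the hypothesis ``non-trivial'' in Lemma~\ref{perinul}: I must make sure $x$ being non-peripheral forces $x$ non-trivial (immediate, since the trivial class is represented by a point, hence peripheral, or simply excluded) and, more importantly, that $f_*(x)$ is non-trivial when $x$ is non-trivial. This follows from injectivity of $f_*$ on $\hpi(\Sigma_1)$: the trivial class maps to the trivial class (as $f$ is a map of based-or-free loops), so if $f_*(x)$ were trivial then $x$ would be trivial, contradicting that $x$ is non-peripheral. So the injectivity hypothesis does double duty here — it both guarantees $f_*$ is injective as a linear map on group rings (so $f_*([x,y]) \neq 0$) and ensures non-triviality is preserved.

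I expect no real obstacle: this lemma is essentially the ``dual'' of Lemma~\ref{imgbdy}, trading the surjectivity hypothesis for an injectivity hypothesis and running the bracket-characterization of peripherality backwards. The only thing to be careful about is that in Lemma~\ref{imgbdy} surjectivity was used to pass from ``$[f_*(x), f_*(y)] = 0$ for all $y$ in the image'' to ``$[f_*(x), z] = 0$ for all $z \in \hpi(\Sigma_2)$'', whereas here I do not need that step at all — I only need to produce a \emph{single} witness $z = f_*(y)$ with $[f_*(x), z] \neq 0$, which injectivity hands me directly. So the proof is short: take the non-peripheral $x$, get the witness $y$ from Lemma~\ref{perinul}, push forward, use injectivity to keep the bracket nonzero, and conclude by Lemma~\ref{perinul}.
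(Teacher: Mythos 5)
Your proof is correct and is essentially the paper's argument run in contrapositive form: both rest on Lemma~\ref{perinul} together with the observation that injectivity of $f_*$ on $\hpi(\Sigma_1)$ makes the induced map on $\Z[\hpi(\Sigma_1)]$ injective, so that $[x,y]$ and $f_*([x,y])=[f_*(x),f_*(y)]$ vanish simultaneously. The paper states this directly ($[f_*(x),f_*(y)]=0$ for all $y$ forces $[x,y]=0$ for all $y$), while you negate it; the extra care you take with the non-triviality hypothesis of Lemma~\ref{perinul} is a reasonable, if minor, addition.
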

\begin{proof}
As $f_*(x)$ is peripheral, by Lemma~\ref{perinul} $[f_*(x),f_*(y)]=0$ for all $y\in\hpi(\Sigma_1)$. As $f_*$ is injective and commutes with the Goldman bracket, it follows that $[x,y]=0$ for all $y\in\hpi(\Sigma_1)$. Thus, $x$ is peripheral by Lemma~\ref{perinul}.
\end{proof}

\begin{lemma}\label{bdyhomotbdy}
Suppose $f\co \Sigma_1\to\Sigma_2$ is a homotopy equivalence and $C_1$, $C_2$, \dots $C_n$ are the boundary components of $\Sigma_1$. Then there are boundary components $C'_1$, $C_2'$,\dots, $C'_n$ of $\Sigma_2$ such that $f(C_i)$ is homotopic to $C_i'$ for all $i$, $1\leq i\leq n$. Further the boundary components $C_i'$ are all distinct and $\del\Sigma_2=C_1'\cup C_2'\cup\dots\cup C_n'$. 
\end{lemma}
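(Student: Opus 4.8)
The plan is to deduce everything from Lemmas~\ref{imgbdy} and~\ref{bdyinv} together with the elementary description of primitive peripheral classes recalled in Section~\ref{main}'s preliminaries (we are in the negative Euler characteristic case). First I would record two purely homotopy-theoretic facts about $f_*\co\hpi(\Sigma_1)\to\hpi(\Sigma_2)$: since $f$ is a homotopy equivalence, $f_*$ is a bijection, and, choosing a basepoint $p\in\Sigma_1$ and $q=f(p)$, the induced map $f_*\co\pi_1(\Sigma_1,p)\to\pi_1(\Sigma_2,q)$ is a group isomorphism, so $f_*$ and its inverse commute with the inverse and power operations on $\hpi$. In particular $f_*$ carries primitive classes to primitive classes and non-primitive classes to non-primitive classes in both directions. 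Combining this with Lemmas~\ref{imgbdy} and~\ref{bdyinv} (both applicable since $f_*$ is injective and surjective), we conclude that $f_*$ restricts to a bijection from the set of primitive peripheral classes of $\Sigma_1$ onto the set of primitive peripheral classes of $\Sigma_2$.

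Next I would invoke the description recalled above: for a compact surface of negative Euler characteristic, every peripheral class is represented by a curve contained in some boundary circle, hence is a power of the class of that circle, so the primitive peripheral classes are exactly the classes $\langle C\rangle^{\pm1}$ as $C$ ranges over the boundary components, with $\langle C\rangle\neq\langle C\rangle^{-1}$ and with the pairs $\{\langle C\rangle,\langle C\rangle^{-1}\}$ associated to distinct boundary components being disjoint. Thus the primitive peripheral classes of $\Sigma_1$ partition into the $n$ disjoint pairs $\{\langle C_i\rangle,\langle C_i\rangle^{-1}\}$. Since $f_*(\langle C_i\rangle)$ is a primitive peripheral class of $\Sigma_2$, there is a unique boundary component of $\Sigma_2$, with a unique orientation, whose class equals $f_*(\langle C_i\rangle)$; call this oriented curve $C_i'$. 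Because the loop $f(C_i)$ represents the free homotopy class $f_*(\langle C_i\rangle)=\langle C_i'\rangle$, we get that $f(C_i)$ is freely homotopic to $C_i'$, which is the first assertion.

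Finally I would check that $i\mapsto C_i'$ is a bijection onto the set of boundary components of $\Sigma_2$. If $i\neq j$ then $\langle C_i\rangle\notin\{\langle C_j\rangle,\langle C_j\rangle^{-1}\}$, so by injectivity of $f_*$ and the fact that it commutes with inversion, $f_*(\langle C_i\rangle)\notin\{f_*(\langle C_j\rangle),f_*(\langle C_j\rangle)^{-1}\}$; hence $C_i'$ and $C_j'$ are not the same underlying circle, so the $C_i'$ are distinct boundary components. Surjectivity of $f_*$ onto the primitive peripheral classes of $\Sigma_2$ shows that the class of every boundary component of $\Sigma_2$ equals $f_*(\langle C_i\rangle)$ for some $i$, i.e. every boundary component of $\Sigma_2$ is one of the $C_i'$; therefore $\Sigma_2$ has exactly $n$ boundary components and $\del\Sigma_2=C_1'\cup\dots\cup C_n'$. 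The only step carrying genuine content is the input from Lemmas~\ref{imgbdy} and~\ref{bdyinv}, which is where the hypothesis that $f_*$ commutes with the Goldman bracket is used; the rest is formal, using only that $f$ induces an isomorphism of fundamental groups and the standard description of peripheral classes, so I do not anticipate any real obstacle.
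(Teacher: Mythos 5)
Your proposal is correct and follows essentially the same route as the paper: Lemma~\ref{imgbdy} sends each boundary class of $\Sigma_1$ to a primitive peripheral class of $\Sigma_2$, the classification of primitive peripheral classes in negative Euler characteristic gives distinctness of the $C_i'$, and Lemma~\ref{bdyinv} plus surjectivity of $f_*$ shows every boundary component of $\Sigma_2$ is hit. Your framing in terms of a bijection between the sets of primitive peripheral classes is just a slightly more systematic packaging of the same argument.
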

\begin{proof}
As $f$ is a homotopy equivalence, $f_*\co \hpi(\Sigma_1)\to\hpi(\Sigma_2)$ is both surjective and injective and maps primitive classes to primitive classes. For $1\leq i\leq n$, let $\alpha_i$ be a closed curve parametrizing $C_i$. By Lemma~\ref{imgbdy}, $\alpha_i'=f(\alpha_i)$ represents a primitive peripheral class for $1\leq i\leq n$, and hence represents a boundary component $C'_i$. Furthermore, as $\Sigma_1$ has negative Euler characteristic, for $i\neq j$, $\alpha_i\neq \alpha_j^{\pm 1}$, and hence $\alpha'_i\neq {\alpha'}_j^{\pm 1}$. It follows that the curves $\alpha_i'$ are homotopic to curves representing distinct boundary components $C_i'$, $1\leq i\leq n$. 

Finally, we see that $\del\Sigma_2=C_1'\cup C_2'\cup\dots\cup C_n'$. Namely, if $\gamma$ is a curve parametrizing a boundary component $C''$ of $\del\Sigma_2$, $\langle\gamma\rangle=f_*(x)$ for some class $x$ (as $f_*$ is surjective). The class $x$ must be peripheral  by Lemma~\ref{bdyinv} and primitive as $\gamma$ is primitive. Hence $x$ is represented by either $\alpha_i$ or $\alpha_i^{-1}$ for some $i$, $1\leq i\leq n$. In either case, $C''=C_i'$.
\end{proof}

\begin{lemma}\label{bdy2bdy}
Suppose $f\co \Sigma_1\to\Sigma_2$ is a homotopy equivalence. Then $f$ is homotopic to a map $g\co \Sigma_1\to\Sigma_2$ so that $g(\del \Sigma_1)=\del\Sigma_2$ and $g|_{\del\Sigma_1}\co  \del\Sigma_1\to\del\Sigma_2$ is a homeomorphism.
\end{lemma}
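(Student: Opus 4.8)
The plan is to deduce this from Lemma~\ref{bdyhomotbdy} by a homotopy-extension argument. Write $C_1,\dots,C_n$ for the boundary components of $\Sigma_1$ and fix parametrizations $\alpha_i\co S^1\to C_i$. By Lemma~\ref{bdyhomotbdy} there are pairwise distinct boundary components $C_1',\dots,C_n'$ of $\Sigma_2$ with $\del\Sigma_2=C_1'\cup\dots\cup C_n'$ and with $f\circ\alpha_i$ freely homotopic in $\Sigma_2$ to a parametrization of $C_i'$.

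First I would promote each of these free homotopies to one ending at a homeomorphism onto $C_i'$. Since $f$ is a homotopy equivalence between connected surfaces, $f_*$ is a $\pi_1$-isomorphism, so $f_*(\la\alpha_i\ra)$ is a non-trivial primitive peripheral class; hence the parametrization of $C_i'$ freely homotopic to $f\circ\alpha_i$ traverses $C_i'$ exactly once. As any degree-one self-map of a circle is homotopic to the identity, $f\circ\alpha_i$ is therefore freely homotopic in $\Sigma_2$ to a homeomorphism $\beta_i\co S^1\to C_i'$.

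Next I would run these homotopies simultaneously over the disjoint circles $C_1,\dots,C_n$ to get a homotopy $H\co \del\Sigma_1\times[0,1]\to\Sigma_2$ from $f|_{\del\Sigma_1}$ to a map $h$ that restricts to a homeomorphism $C_i\to C_i'$ for each $i$. Since the $C_i'$ are distinct and $\del\Sigma_2=C_1'\cup\dots\cup C_n'$, the map $h\co \del\Sigma_1\to\del\Sigma_2$ is a homeomorphism. Because the inclusion $\del\Sigma_1\hookrightarrow\Sigma_1$ is a cofibration, $H$ extends to a homotopy $\widetilde H\co \Sigma_1\times[0,1]\to\Sigma_2$ with $\widetilde H_0=f$, and $g:=\widetilde H_1$ is then homotopic to $f$ with $g|_{\del\Sigma_1}=h$, so $g(\del\Sigma_1)=\del\Sigma_2$ and $g|_{\del\Sigma_1}$ is a homeomorphism, as desired.

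The one point requiring genuine care is the degree bookkeeping in the second step: \emph{free} homotopy of $f\circ\alpha_i$ to $C_i'$ a priori only produces a map of some degree $d$ onto $C_i'$, and one must use primitivity of the peripheral class $f_*(\la\alpha_i\ra)$ to force $d=\pm1$, after which the passage to an honest homeomorphism of circles is routine. Assembling the individual homotopies and invoking the homotopy extension property are then purely formal.
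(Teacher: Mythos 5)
Your proposal is correct and follows essentially the same route as the paper: deduce from Lemma~\ref{bdyhomotbdy} that $f|_{\del\Sigma_1}$ is homotopic in $\Sigma_2$ to a homeomorphism onto $\del\Sigma_2$, then transfer this homotopy to all of $\Sigma_1$. The only differences are cosmetic: you invoke the homotopy extension property abstractly where the paper implements it by hand with a collar $\del\Sigma_1\times[0,1]$, and you spell out the primitivity/degree argument that the paper leaves implicit when passing from ``$f(C_i)$ homotopic to $C_i'$'' to a homeomorphism of boundary circles.
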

\begin{proof}
By Lemma~\ref{bdyhomotbdy}, the restriction of $f$ to $\del\Sigma_1$ is homotopic in $\Sigma_2$ to a homeomorphism $\varphi\co \del\Sigma_1\to\del\Sigma_2$. Let $h\co \del\Sigma_1\times [0,1]\to\del\Sigma_2$ be such a homotopy, i.e., $h$ is a map so that $h(\cdot,0)=f(\cdot)$ and $h(\cdot,1)=\varphi(\cdot)$. 

Let $\Sigma_1'=\Sigma_1\coprod_{\del\Sigma_1=(\del\Sigma_1\times\{0\})} \del\Sigma_1\times [0,1]$ be the space obtained from the disjoint union of $\Sigma_1$ and $\del\Sigma_1\times [0,1]$ by identifying points $x\in\del\Sigma_1\subset \Sigma_1$ with the corresponding points $(x,0)\in \del\Sigma_1\times [0,1]$. Define a map $g'\co \Sigma_1'\to\Sigma_2$ by $g'(x)=f(x)$ for $x\in \Sigma_1$ and $g'(x,t)=h(x,t)$ for $(x,t)\in \del\Sigma_1\times [0,1]$. 

Observe that $\del\Sigma_1'=\del\Sigma_1\times\{1\}$. The inclusion map $\Sigma_1\to\Sigma_1'$ is homotopic to a homeomorphism $\psi$ so that, for $x\in\del\Sigma_1$, $\psi(x)=(x,1)$. Let $H'\co \Sigma_1\to\Sigma_1'$ be such a homotopy, i.e., $H'$ is a map so that $H'(\cdot,0)$ is the inclusion map and $H'(\cdot, 1)=\psi(\cdot)$. Let $g\co \Sigma_1\to\Sigma_2$ be given by
$$g(x)=g'(H'(x,1))=g'(\psi(x)).$$
Then, for $x\in\del\Sigma_1$, $g(x)=g'(\psi(x))=g'((x,1))=h(x,1)=\varphi(x)$, i.e. $g\vert_{\del \Sigma_1}=\varphi$. Hence $g$ is a map from $\Sigma_1\to\Sigma_2$ so that $g(\del \Sigma_1)=\del\Sigma_2$ and $g|_{\del\Sigma_1}\co  \del\Sigma_1\to\del\Sigma_2$ is a homeomorphism. Finally, for $x\in\Sigma_1$, $g'(H'(x,0))=g'(x)=f(x)$. Hence, 
a homotopy from $f$ to $g$ is given by
$$H(x,t)=g'(H(x,t)).$$
\end{proof}

By a theorem of Nielsen, $g$, and hence $f$, is homotopic to a homeomorphism  (see Lemma 1.4.3 of~\cite{Wa} -- as $g\colon(\Sigma_1,\del\Sigma_1)\to (\Sigma_2,\del\Sigma_2)$ induces an isomorphism of fundamental groups, it follows that $g$ is homotopic to a homeomorphism). This completes the proof of Theorem~\ref{main}.\qed

\end{document}